\theoremstyle{plain}
\newtheorem*{theorem}{Theorem}
\newcommand{\du}{\mathrm{d}}
\newcommand{\iu}{\mathrm{i}}
\newcommand{\bbC}{\mathbb{C}}
\newcommand{\bbR}{\mathbb{R}}
\newcommand{\frD}{\mathfrak{D}}
\newcommand{\frH}{\mathfrak{H}}
\DeclareMathOperator{\im}{Im}
\DeclareMathOperator{\linspan}{span}
\title{On extensions of symmetric operators}
\author{Namig J. Guliyev}
\address{Institute of Mathematics and Mechanics, Azerbaijan National Academy of Sciences, 9 B.~Vahabzadeh str., AZ1141, Baku, Azerbaijan.}
\email{njguliyev@gmail.com}
\subjclass[2010]{47A20, 47B25}
\keywords{symmetric operator, self-adjoint extension, generalized resolvent}
\begin{document}
\maketitle
\begin{abstract}
We give an explicit description of all minimal self-adjoint extensions of a densely defined, closed symmetric operator in a Hilbert space with deficiency indices $(1, 1)$.
\end{abstract}

There is a widely used linearization technique in the theory of Sturm--Liouville problems with boundary and/or discontinuity conditions polynomially dependent on the eigenvalue parameter. One considers a Hilbert (or Pontryagin) space of the form $L^2 \oplus \mathbb{C}^k$ and constructs a self-adjoint operator in this space such that the eigenvalue problem for this operator and the original boundary value problem become equivalent, in the sense that their eigenvalues coincide, the eigenfunctions of the latter problem are in one-to-one correspondence with the first components of the eigenvectors of the former problem, and so on (see, e.g., \cite{BCNW18}, \cite{BP18}, \cite{G17}, and the references therein). Fulton \cite[Remark 2.1]{F77} attributes this technique to Friedman \cite[pp. 205--207]{F56}. The purpose of this short paper is to show that a straightforward generalization of this technique gives an explicit description of all minimal self-adjoint extensions of a densely defined, closed symmetric operator with deficiency indices $(1, 1)$ (see below for definitions).

Let $A$ be a densely defined, closed symmetric operator in a separable Hilbert space $\frH$ with deficiency indices $(1, 1)$. Let $\{ \bbC, \Gamma_0, \Gamma_1 \}$ be a \emph{boundary triplet} for $A^{*}$. This means that $\Gamma_0$, $\Gamma_1 \colon \frD(A^{*}) \to \bbC$ are two linear mappings such that abstract Green's identity
$$
  \langle A^{*} x, y \rangle_{\frH} - \langle x, A^{*} y \rangle_{\frH} = \Gamma_1 x \cdot \overline{\Gamma_0 y} - \Gamma_0 x \cdot \overline{\Gamma_1 y}, \qquad x, y \in \frD(A^{*})
$$
holds and the mapping $\Gamma \colon \frD(A^{*}) \to \bbC^2,\ x \mapsto (\Gamma_0 x, \Gamma_1 x)$ is surjective \cite{D15}, \cite[Chapter 14]{S12}. The domain of $A$ then coincides with the kernel of $\Gamma$:
\begin{equation} \label{eq:kernel}
  \frD(A) = \{ x \in \frD(A^{*}) \mid \Gamma_0 x = \Gamma_1 x = 0 \}.
\end{equation}
If $\widetilde{A}$ is a self-adjoint operator in a Hilbert space $\widetilde{\frH} \supset \frH$ such that $A \subset \widetilde{A}$, then $\widetilde{A}$ is called a \emph{self-adjoint extension} of $A$. A self-adjoint extension $\widetilde{A}$ is called \emph{minimal} if no nontrivial subspace of $\widetilde{\frH} \ominus \frH$ is reducing for $\widetilde{A}$ \cite[Section 4]{N40}, or equivalently \cite[Lemma 5.17]{S12}
$$
  \widetilde{\frH} = \overline{\linspan \left\{ \left( \widetilde{A} - \lambda I \right)^{-1} x \biggm| x \in \frH,\ \lambda \in \bbC \setminus \bbR \right\}},
$$
where span denotes the linear span.

A holomorphic operator-valued function $R(\lambda)$ on $\bbC \setminus \bbR$ is called a \emph{generalized resolvent} of $A$ if
$$
  R(\lambda) = \left. P_{\frH} \left( \widetilde{A} - \lambda I \right)^{-1} \right|_{\frH}
$$
for some self-adjoint extension $\widetilde{A}$, where $P_{\frH}$ is the orthogonal projection onto $\frH$. For every generalized resolvent there is a unique (up to unitary equivalence) minimal self-adjoint extension with this property (see \cite[Theorem 8]{N40}). On the other hand, there is a one-to-one correspondence between generalized resolvents $R(\lambda)$ and functions $\omega(\lambda)$ holomorphic on the open upper half-plane $\bbC_{+}$ with $|\omega(\lambda)| \le 1$: for every $x \in \frH$ the value $y := R(\lambda) x$ satisfies the equation
$$
  A^{*} y - \lambda y = x
$$
and the condition
\begin{equation*}
  \left( \omega(\lambda) - 1 \right) \Gamma_1 y - \iu \left( \omega(\lambda) + 1 \right) \Gamma_0 y = 0
\end{equation*}
(see \cite{B76}). Denoting
$$
  f(\lambda) := \frac{\iu \omega(\lambda) + \iu}{1 - \omega(\lambda)}
$$
we obtain a one-to-one correspondence between generalized resolvents $R(\lambda)$ and Herglotz--Nevanlinna functions $f$, i.e. functions holomorphic on $\bbC_{+}$ with $\im f(\lambda) \ge 0$ (cf. \cite[Subsection 1.2]{S55}). The above condition then becomes
\begin{equation} \label{eq:boundary}
  \Gamma_1 y + f(\lambda) \Gamma_0 y = 0.
\end{equation}
The ``Dirichlet'' condition $\Gamma_0 y = 0$ corresponds to $f = \infty$. We will denote the generalized resolvent corresponding to $f$ by $R_f(\lambda)$. It should be noted that complete parameterizations of all generalized resolvents in terms of Herglotz--Nevanlinna functions were first obtained independently by Naimark \cite{N43} and Krein \cite{K44}.

We are now ready to construct our minimal self-adjoint extension corresponding to a Herglotz--Nevanlinna function $f$. This function has a unique representation of the form \cite[Appendix F]{S12}
$$
  f(\lambda) = h_0 \lambda + h + \int_{-\infty}^{+\infty} \left( \frac{1}{t - \lambda} - \frac{t}{1 + t^2} \right) \,\du \sigma(t),
$$
where $h_0 \ge 0$, $h \in \bbR$, and
$$
  \int_{-\infty}^{+\infty} \frac{\du \sigma(t)}{1 + t^2} < \infty
$$
(the reader may refer to \cite[Appendix A]{GT00} for some examples of such representations). If $h_0 > 0$ then we consider the Hilbert space $\widetilde{\frH} := \frH \oplus L^2(\mathbb{R}; \du \sigma) \oplus \mathbb{C}$ with inner product given by
$$\langle \widetilde{x}, \widetilde{y} \rangle_{\widetilde{\frH}} := \langle x_0, y_0 \rangle_{\frH} + \int_{-\infty}^{+\infty} x_1(t) \overline{y_1(t)} \,\du \sigma(t) + \frac{x_2 \overline{y_2}}{h_0}$$
for
$$
  \widetilde{x} = \begin{pmatrix} x_0 \\ x_1 \\ x_2 \end{pmatrix}, \qquad \widetilde{y} = \begin{pmatrix} y_0 \\ y_1 \\ y_2 \end{pmatrix} \in \widetilde{\frH},
$$
and define the operator
$$\widetilde{A}\widetilde{x} := \begin{pmatrix} A^{*} x_0 \\ t x_1(t) - \Gamma_0 x_0 \\ \Gamma_1 x_0 + h \Gamma_0 x_0 + \int_{-\infty}^{+\infty} \left( x_1(t) - \frac{t}{1 + t^2} \Gamma_0 x_0 \right) \,\du \sigma(t) \end{pmatrix}$$
with
\begin{equation*}
  \frD(\widetilde{A}) := \{ \widetilde{x} \in \widetilde{\frH} \mid x_0 \in \frD(A^{*}),\ t x_1(t) - \Gamma_0 x_0 \in L^2(\mathbb{R}; \du \sigma),\ x_2 = -h_0 \Gamma_0 x_0 \}.
\end{equation*}
If $h_0 = 0$ then we set $\widetilde{\frH} := \frH \oplus L^2(\mathbb{R}; \du \sigma)$, and define $\widetilde{A}$ by
$$\widetilde{A}\widetilde{x} := \begin{pmatrix} A^{*} x_0 \\ t x_1(t) - \Gamma_0 x_0 \end{pmatrix}$$
and
\begin{multline*}
  \frD(\widetilde{A}) := \left\{ \widetilde{x} \in \widetilde{\frH} \biggm| x_0 \in \frD(A^{*}),\ t x_1(t) - \Gamma_0 x_0 \in L^2(\mathbb{R}; \du \sigma), \vphantom{\int_{-\infty}^{+\infty}} \right. \\
  \left. \Gamma_1 x_0 + h \Gamma_0 x_0 + \int_{-\infty}^{+\infty} \left( x_1(t) - \frac{t}{1 + t^2} \Gamma_0 x_0 \right) \,\du \sigma(t) = 0 \right\}.
\end{multline*}
The extension $\widetilde{A}$ is canonical (i.e. $\widetilde{\frH} = \frH$) if and only if $f$ is a real constant (or $\infty$).

\begin{theorem}
  For each Herglotz--Nevanlinna function $f$ the operator $\widetilde{A}$ defined above is a minimal self-adjoint extension of the operator $A$ with the corresponding generalized resolvent $R_f(\lambda)$.
\end{theorem}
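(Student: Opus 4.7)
The plan is to verify, in order: (i) $\widetilde{A}$ extends $A$; (ii) $\widetilde{A}$ is symmetric; (iii) $\widetilde{A} - \lambda I$ maps $\frD(\widetilde{A})$ bijectively onto $\widetilde{\frH}$ for every nonreal $\lambda$, which together with (ii) yields self-adjointness; (iv) the first component of $(\widetilde{A} - \lambda I)^{-1} \widetilde{x}$ for $\widetilde{x} \in \frH$ equals $R_{f}(\lambda) \widetilde{x}$; and (v) the extension is minimal. I would work out the case $h_0 > 0$; the case $h_0 = 0$ runs identically with the scalar component absent and the third-row relation imposed as a domain constraint.

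Step (i) follows from \eqref{eq:kernel}: for $x_0 \in \frD(A)$ the vector $(x_0, 0, 0)$ lies in $\frD(\widetilde{A})$ and $\widetilde{A}(x_0, 0, 0) = (A x_0, 0, 0)$. For (ii), I would expand $\langle \widetilde{A}\widetilde{x}, \widetilde{y}\rangle_{\widetilde{\frH}} - \langle \widetilde{x}, \widetilde{A}\widetilde{y}\rangle_{\widetilde{\frH}}$ component by component. The $\frH$ pairing contributes via abstract Green's identity the surface term $\Gamma_1 x_0 \overline{\Gamma_0 y_0} - \Gamma_0 x_0 \overline{\Gamma_1 y_0}$; the $L^2(\du\sigma)$ pairing reduces (after the $t x_1 \overline{y_1}$ terms cancel pointwise) to the well-defined integral $\int(x_1 \overline{\Gamma_0 y_0} - \Gamma_0 x_0 \overline{y_1})\,\du\sigma$; and the scalar contribution, after inserting $x_2 = -h_0 \Gamma_0 x_0$ and $y_2 = -h_0 \Gamma_0 y_0$, cancels the Green's-identity surface term, the $h$-terms (since $h \in \bbR$), and the $t/(1+t^2)$ pieces, with the leftover integrals exactly annihilating the $L^2(\du\sigma)$ contribution.

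The crux is step (iii). Given $\widetilde{x} = (x_0, x_1, x_2) \in \widetilde{\frH}$ and nonreal $\lambda$, the second row gives $y_1(t) = (x_1(t) + \Gamma_0 y_0)/(t - \lambda)$, the domain constraint fixes $y_2 = -h_0 \Gamma_0 y_0$, and substituting both into the third row collapses the coefficient of $\Gamma_0 y_0$ to
$$
  h + h_0 \lambda + \int_{-\infty}^{+\infty} \left( \frac{1}{t-\lambda} - \frac{t}{1+t^2} \right)\,\du\sigma(t) = f(\lambda),
$$
leaving the scalar condition
$$
  \Gamma_1 y_0 + f(\lambda)\Gamma_0 y_0 = x_2 - \int_{-\infty}^{+\infty} \frac{x_1(t)}{t-\lambda}\,\du\sigma(t).
$$
Unique solvability of $(A^{*} - \lambda I) y_0 = x_0$ subject to this boundary condition is the standard content of boundary-triplet theory: the Weyl function $M(\lambda)$ associated to $\{\bbC, \Gamma_0, \Gamma_1\}$ is Herglotz--Nevanlinna with $\im M(\lambda)$ strictly of the sign of $\im \lambda$, whereas $\im f(\lambda)$ has the same sign, so $M(\lambda) + f(\lambda) \ne 0$ off the real axis, giving existence and uniqueness of $y_0$. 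Step (iv) is the immediate specialization $x_1 = 0$, $x_2 = 0$: the condition reduces to \eqref{eq:boundary}, identifying $y_0$ with $R_{f}(\lambda) x_0$.

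For minimality (v), the explicit inversion shows that for $\widetilde{x} \in \frH$ the resolvent image has second component $\Gamma_0 y_0/(t-\lambda)$ and third component $-h_0 \Gamma_0 y_0$. Since $\lambda(\widetilde{A} - \lambda I)^{-1}\widetilde{x} \to -\widetilde{x}$ as $|\lambda| \to \infty$, the closure of the span contains $\frH$, so it suffices to realize every $(g, c) \in L^2(\du\sigma) \oplus \bbC$. Varying $x_0 \in \frH$ for fixed $\lambda$, the scalar $\Gamma_0 R_{f}(\lambda) x_0$ sweeps all of $\bbC$ (the boundary condition $\Gamma_1 + f(\lambda) \Gamma_0 = 0$ does not force $\Gamma_0 = 0$ unless $f(\lambda) = \infty$), so the span contains $(\alpha/(t-\lambda), -h_0 \alpha)$ for all $\alpha \in \bbC$ and $\lambda \in \bbC \setminus \bbR$. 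Orthogonality of $(g, c)$ to this family gives $\int \overline{g(t)}(t-\lambda)^{-1}\,\du\sigma(t) = \overline{c}$ identically in $\lambda$; sending $|\im \lambda| \to \infty$ forces $c = 0$, and Stieltjes inversion then forces $g = 0$. The main obstacle I anticipate is the justification in step (iii): verifying $M(\lambda) + f(\lambda) \ne 0$ via the sign analysis of imaginary parts, and handling the Dirichlet degenerations (isolated $\lambda$ at which $f(\lambda) = \infty$, or the global case $f \equiv \infty$ which corresponds to a canonical extension) in a unified manner.
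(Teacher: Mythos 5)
Your proposal is correct, and it reaches self-adjointness by a genuinely different route than the paper. The paper computes the adjoint directly: it takes $\widetilde{y},\widetilde{z}$ with $\langle \widetilde{A}\widetilde{x},\widetilde{y}\rangle = \langle\widetilde{x},\widetilde{z}\rangle$ for all $\widetilde{x}\in\frD(\widetilde{A})$ and, by testing successively with $x_1 = 0 = x_2$ and $x_0\in\frD(A)$, then with $\Gamma_0 x_0 = 0$, $\Gamma_1 x_0 = 1$, then with $x_0 = 0$, then with $\Gamma_0 x_0 = 1$, recovers one by one the defining conditions of $\frD(\widetilde{A})$ for $\widetilde{y}$; this needs only the surjectivity of $\Gamma$ and density of $\{x_1 : tx_1\in L^2(\bbR;\du\sigma)\}$, and no resolvent analysis at all. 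You instead prove symmetry plus surjectivity of $\widetilde{A}-\lambda I$, which forces you to import the Weyl function $M(\lambda)$ and the sign argument $\im(M(\lambda)+f(\lambda))>0$ on $\bbC_{+}$; that is a correct and standard piece of boundary-triplet theory, but it is an external input the paper avoids. What your route buys in exchange is the explicit resolvent formula $y_1 = (x_1+\Gamma_0 y_0)/(t-\lambda)$, $y_2 = -h_0\Gamma_0 y_0$, with $\Gamma_1 y_0 + f(\lambda)\Gamma_0 y_0 = x_2 - \int x_1(t)(t-\lambda)^{-1}\,\du\sigma(t)$, which makes the identification of the generalized resolvent (the step the paper dismisses as ``straightforward'') and the minimality argument completely concrete. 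The minimality endgames essentially coincide (orthogonality, then Stieltjes--Perron inversion applied to $z_1\,\du\sigma$), though the paper gets $z_2=0$ as a consequence of $z_1=0$ rather than by letting $|\im\lambda|\to\infty$, and it extracts $z_0=0$ from surjectivity of $\Gamma$ rather than by first showing $\frH$ lies in the closed span. Two small points: a Herglotz--Nevanlinna function is holomorphic on $\bbC_{+}$, so the ``isolated $\lambda$ with $f(\lambda)=\infty$'' degeneration you worry about cannot occur (only the global case $f\equiv\infty$, the canonical extension $A_0$, needs separate and trivial treatment); and in your symmetry computation the individual integrals $\int x_1\,\du\sigma$ and $\Gamma_0 x_0\int t(1+t^2)^{-1}\,\du\sigma$ may each diverge, so the cancellation must be organized around the convergent combinations $\int\bigl(x_1 - t(1+t^2)^{-1}\Gamma_0 x_0\bigr)\,\du\sigma$ rather than term by term.
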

\begin{proof}
We will consider the case $h_0 > 0$; the other case can be proved similarly. To prove the self-adjointness, let $\widetilde{y}$, $\widetilde{z} \in \widetilde{\frH}$ be such that
\begin{equation} \label{eq:adjoint}
  \langle \widetilde{A} \widetilde{x}, \widetilde{y} \rangle_{\widetilde{\frH}} = \langle \widetilde{x}, \widetilde{z} \rangle_{\widetilde{\frH}}
\end{equation}
for all $\widetilde{x} \in \frD(\widetilde{A})$. Taking into account (\ref{eq:kernel}) and setting $x_1(t) \equiv 0 = x_2$ we obtain
$$
  \langle A x_0, y_0 \rangle_{\frH} = \langle x_0, z_0 \rangle_{\frH}, \qquad x_0 \in \frD(A).
$$
Hence $y_0 \in \frD(A^{*})$ and $z_0 = A^{*} y_0$. By surjectivity of $\Gamma$, there exists
$x_0 \in \frD(A^{*})$ with $\Gamma_0 x_0 = 0$ and $\Gamma_1 x_0 = 1$. Then for this $x_0$ and $x_1(t) \equiv 0$ the equality (\ref{eq:adjoint}) gives $y_2 = -h_0 \Gamma_0 y_0$. Setting $x_0 = 0$ in (\ref{eq:adjoint}) we get
$$
  \int_{-\infty}^{+\infty} x_1(t) \overline{\left( t y_1(t) - \Gamma_0 y_0 - z_1(t) \right)} \,\du \sigma(t) = 0
$$
for all $x_1(t) \in L^2(\mathbb{R}; \du \sigma)$ with $t x_1(t) \in L^2(\mathbb{R}; \du \sigma)$. Since such functions are dense in $L^2(\mathbb{R}; \du \sigma)$, we obtain $t y_1(t) - \Gamma_0 y_0 = z_1(t) \in L^2(\mathbb{R}; \du \sigma)$. Finally choosing $x_0 \in \frD(A^{*})$ with $\Gamma_0 x_0 = 1$, the equality (\ref{eq:adjoint}) yields
$$
  z_2 = \Gamma_1 y_0 + h \Gamma_0 y_0 + \int_{-\infty}^{+\infty} \left( y_1(t) - \frac{t}{1 + t^2} \Gamma_0 y_0 \right) \,\du \sigma(t).
$$
Thus $\widetilde{y} \in \frD(\widetilde{A})$ and $\widetilde{A} \widetilde{y} = \widetilde{z}$.

To see that the generalized resolvent corresponding to $\widetilde{A}$ is $R_f(\lambda)$, it suffices to check that if
$$
  \widetilde{y} = \begin{pmatrix} y_0 \\ y_1 \\ y_2 \end{pmatrix} = \left( \widetilde{A} - \lambda I \right)^{-1} \begin{pmatrix} x \\ 0 \\ 0 \end{pmatrix}
$$
then $y_0$ satisfies (\ref{eq:boundary}), and this is straightforward. Finally, to check the minimality, we need to verify that the linear span of all $\widetilde{y}$ of this form with all possible values of $x \in \frH$ and $\lambda \in \bbC \setminus \bbR$ is dense in $\widetilde{\frH}$. To this end, let $\widetilde{z} \in \widetilde{\frH}$ be orthogonal to all such $\widetilde{y}$, i.e.
$$
  \langle \widetilde{z}, \widetilde{y} \rangle_{\widetilde{\frH}} = \langle z_0, y_0 \rangle_{\frH} + \int_{-\infty}^{+\infty} z_1(t) \frac{\overline{\Gamma_0 y_0}}{t - \overline{\lambda}} \,\du \sigma(t) - z_2 \overline{\Gamma_0 y_0} = 0
$$
for all $y_0 \in \frD(A^{*})$. Surjectivity of $\Gamma$ implies $z_0 = 0$ and
$$
  \int_{-\infty}^{+\infty} \frac{z_1(t)}{t - \overline{\lambda}} \,\du \sigma(t) = z_2, \qquad \lambda \in \bbC \setminus \bbR.
$$
Now the Stieltjes--Perron inversion formula \cite[Theorem F.2]{S12} applied to the regular complex Borel measure $z_1\,\du \sigma$ yields $z_1(t) = 0$ for $\du \sigma$-a.e. $t$ and consequently $z_2 = 0$.
\end{proof}

\end{document}